\title{An inequality for the heat kernel on an Abelian Cayley graph}
\author{Thomas McMurray Price
\\ \href{mailto:tom.price.math@gmail.com}{tom.price.math@gmail.com} }
\newtheorem{theorem}{Theorem}[section]
\newtheorem{corollary}[theorem]{Corollary}
\newtheorem{lemma}[theorem]{Lemma}
\newtheorem{proposition}[theorem]{Proposition}
\theoremstyle{definition}
\newtheorem{definition}[theorem]{Definition}
\theoremstyle{remark}
\newtheorem{remark}[theorem]{Remark}
\DeclareMathOperator{\cexp}{exp^*}
\DeclareMathOperator{\arccosh}{arccosh}
\begin{document}

\maketitle

\begin{abstract}

We demonstrate a relationship between the heat kernel on a finite weighted Abelian Cayley graph and Gaussian functions on lattices. This can be used to prove a new inequality for the heat kernel on such a graph: when $t \leq t'$,
$$\frac{H_t(u, v)}{H_t(u,u)} \leq \frac{H_{t'}(u, v)}{H_{t'}(u,u)}$$
This was an open problem posed by Regev and Shinkar.

\end{abstract}

\section{Introduction}

Let $\Gamma = (V, E)$ be a finite undirected weighted graph. Let $L$ be the graph laplacian of $\Gamma$. We then define the \textbf{heat kernel} $H_t$ as the matrix exponential $e^{-tL}$, for any $t \in \mathbb{R}_{> 0}$. The value $H_t(u, v)$ can be thought of as the probability that a continuous-time random walk starting at $u$ ends up at $v$ after time $t$. See Section 1.1 of \cite{RS} for more detail on this interpretation. We will say that $G$ has \textbf{monotonic diffusion} if the following holds whenever $t' \geq t$, for all vertices $v$ and $u$:

$$\frac{H_t(u, v)}{H_t(u,u)} \leq \frac{H_{t'}(u, v)}{H_{t'}(u,u)}$$

We'll say that $\Gamma$ is a \textbf{weighted Abelian Cayley graph} if there is an Abelian group structure on $V$ under which $\Gamma$ is translation-invariant. In this note we will prove the following theorem:

\begin{theorem} \label{thm:main_ineq}
Any weighted Abelian Cayley graph has monotonic diffusion.
\end{theorem}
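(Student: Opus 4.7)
Since $\Gamma$ is an Abelian Cayley graph, every group translation commutes with the Laplacian $L$, so $L$ is diagonalized in the basis of characters of $G$ with eigenvalues $\lambda_\chi = \sum_s w_s(1-\chi(s)) \geq 0$. Consequently
$$H_t(u,v) = \frac{1}{|G|}\sum_{\chi\in\widehat G} e^{-t\lambda_\chi}\chi(v-u),$$
$H_t(u,u)$ is independent of $u$, and the theorem reduces to showing that for each $w\in G$ the ratio $\bigl(\sum_\chi e^{-t\lambda_\chi}\chi(w)\bigr)\bigl/\bigl(\sum_\chi e^{-t\lambda_\chi}\bigr)$ is nondecreasing in $t$.

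The central step is to realize these spectral sums as Gaussian sums on a lattice --- the relationship promised by the abstract. Writing $G = \mathbb Z^n/\Lambda$ for a full-rank sublattice $\Lambda\subset\mathbb Z^n$, I would aim to exhibit a time-dependent Gaussian density $g_t$ on $\mathbb R^n$ (possibly on a larger ambient space into which $\Lambda$ embeds) and a normalization $C_t$ independent of $v$ such that
$$H_t(0,v)\;=\;C_t\sum_{x\in v+\Lambda}g_t(x).$$
Granted such a representation, the ratio $H_t(0,v)/H_t(0,0)$ becomes the lattice-Gaussian ratio $\rho_t(v+\Lambda)/\rho_t(\Lambda)$ with $\rho_t(S)=\sum_{x\in S}g_t(x)$; its monotonicity in the Gaussian width is a known property of lattice Gaussians, provable by differentiating in $t$ and rewriting the derivative as a difference of second moments $\mathbb E_{v+\Lambda,t}[|X|^2]-\mathbb E_{\Lambda,t}[|X|^2] \geq 0$, the nonnegativity following because the discrete Gaussian's second moment is minimized on the coset containing the origin.

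The main obstacle is constructing the Gaussian identity. The graph-Laplacian eigenvalues $\lambda_\chi = 2\sum_s w_s\sin^2(\pi y_\chi\cdot s)$ are not a quadratic function of the frequency $y_\chi$, so a direct identification of $H_t$ with a heat kernel on the torus $\mathbb R^n/\Lambda$ fails and some clever transform is needed. My best guess would be an integral representation of each commuting factor $e^{-tw_s(I-T_s)}$ against a one-variable Gaussian, then reassembled via the Abelian structure into a genuine Gaussian on $\mathbb R^n$. I would spend the bulk of my effort on finding this transform; Step 1 is routine and Step 3 is essentially classical.
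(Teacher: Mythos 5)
Your plan correctly identifies the target structure (realize $H_t(0,\cdot)$ as Gaussian mass on lattice cosets and invoke the Regev--Stephens-Davidowitz machinery), but the step you defer --- constructing the Gaussian representation --- is the entire content of the paper, and the specific form you propose for it cannot work. If $H_t(0,v)=C_t\sum_{x\in v+\Lambda}g_t(x)$ for a \emph{fixed} full-rank $\Lambda\subset\mathbb{Z}^n$ (or a fixed embedding of $\Lambda$) with only the Gaussian width depending on $t$, then Poisson summation forces the Fourier multiplier of $H_t$ to be a theta-type function of the frequency, asymptotically quadratic in the exponent; this is incompatible with the actual multiplier $e^{-t\lambda_\chi}$, whose exponent is trigonometric in the frequency, and one can check already for the unweighted $4$-cycle that no width and normalization match for generic $t$. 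This is exactly the obstacle you name, but naming it is not overcoming it. The paper's resolution is structurally different: a ``Gaussian pushforward'' is $\rho(h^{-1}(g))$ for a homomorphism $h$ from an auxiliary lattice $L$ onto $G$, where $L$ is not fixed --- it is an $n$-fold orthogonal direct sum of rank-one lattices whose spacing depends on both $t$ and $n$ --- and $H_t(0,\cdot)$ is obtained only as a limit of such pushforwards as $n\to\infty$, via the approximation $\cexp(\alpha\phi)=\lim_n(\delta+\alpha\phi/n)^{*n}$ with each factor an exact rank-one pushforward up to an $O(1/n^4)$ error that survives $n$-fold convolution powering. Your guess of ``an integral representation of each commuting factor'' gestures in this direction but contains none of the construction or the error analysis.

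Even granting a Gaussian representation for each fixed $t$, your Step 3 would not apply as stated, because the lattices arising at times $t$ and $t'$ are different lattices of different dimensions, not one lattice viewed at two widths; so differentiating in the width parameter is not available. The paper instead compares $t$ and $t'$ through the semigroup factorization $\cexp(t'\tau')=\cexp(t\tau')*\cexp((t'-t)\tau')$ together with the pointwise inequality $\chi(g_1)\chi(g_2)/\chi(0)\le\tfrac{1}{2}\left(\chi(g_1+g_2)+\chi(g_1-g_2)\right)$ for Gaussian pushforwards $\chi$, which shows that convolving such a $\chi$ with any nonnegative even function can only increase the normalized profile $\chi(g)/\chi(0)$. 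Separately, your description of the width-monotonicity of $\rho_s(v+\Lambda)/\rho_s(\Lambda)$ as ``essentially classical'' is off: the second-moment comparison $\mathbb{E}_{v+\Lambda}[|X|^2]\ge\mathbb{E}_{\Lambda}[|X|^2]$ is equivalent to that monotonicity and is itself a corollary of the main inequality of Regev and Stephens-Davidowitz, so your one-line justification of it is circular (though citable). The citation issues are repairable; the missing lattice construction is not a detail but the theorem.
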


For arbitrary weighted undirected graphs, monotonic diffusion often fails to hold. See the appendix of \cite{RS} for a simple case. In 2013, Peres asked whether vertex-transitive graphs necessarily have monotonic diffusion. This was resolved in the negative by Regev and Shinkar in \cite{RS}. It also makes sense to ask whether a Riemannian manifold has monotonic diffusion, since there is a uniquely determined heat kernel in this setting, provided the manifold is complete and the Ricci curvature is bounded from below \cite{Ch}. We present an argument in Appendix \ref{app:monotonicity_counterexample}, due to Jeff Cheeger, that monotonicity does not always hold for Riemannian manifolds. In the other direction, it was shown in \cite{RSD} that all flat tori have monotonic diffusion. Since weighted Abelian Cayley graphs are the closest thing we have to flat tori in the world of graph theory, it is natural to ask whether they have monotonic diffusion. This problem was posed in \cite{RS} and is resolved, in the affirmative, in this note.

We will use the following strategy: we show that, if $\upsilon$ is a nonnegative, even, real-valued function on a finite Abelian group, then the convolutional exponential $\cexp(\upsilon)$ can be represented as a pushforward of a Gaussian function on a lattice. This will allow us to apply an inequality in \cite{RSD} regarding Gaussian sums on lattice cosets.

\section{Gaussian pushforwards}

Throughout this paper, $G$ will refer to an arbitrary finite Abelian group.

\begin{definition}

A \textbf{lattice} is a discrete subgroup of a finite-dimensional inner product space.  If $L$ is a lattice, we define the function $\rho: L \to \mathbb{R}$ as $\rho(x) = e^{-\pi \langle x , x \rangle}$. When $S \subseteq L$, we'll use the notation $\rho(S)$ to mean $\sum_{x \in S} \rho(x)$.

\end{definition}

\begin{definition}

We'll say that a function $\chi: G \to \mathbb{R}$ is a \textbf{Gaussian pushforward} if there exists a lattice $L$ and a group homomorphism $h: L \to G$ such that $\chi(g) = \rho(h^{-1}(g))$ for all $g \in G$. We'll use $X$ to denote the set of all Gaussian pushforwards on $G$.  

\end{definition}

\begin{proposition} \label{thm:X_closed}

$X$ is closed under convolution.

\end{proposition}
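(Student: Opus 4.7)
The plan is to take two Gaussian pushforwards $\chi_1, \chi_2$ arising from lattices $L_1 \subseteq V_1$ and $L_2 \subseteq V_2$ together with homomorphisms $h_i : L_i \to G$, and exhibit a single lattice and homomorphism whose pushforward equals the convolution $\chi_1 * \chi_2$. The natural candidate is the orthogonal direct sum $L := L_1 \oplus L_2$ sitting inside $V_1 \oplus V_2$ (equipped with the direct-sum inner product), paired with the homomorphism $h : L \to G$ defined by $h(x_1, x_2) = h_1(x_1) + h_2(x_2)$. One checks that $L$ is indeed a lattice: it is a subgroup, and it is discrete because each $L_i$ is discrete in $V_i$.

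The key algebraic observation is that $\rho$ is multiplicative on orthogonal direct sums. Since $\langle (x_1, x_2), (x_1, x_2) \rangle = \langle x_1, x_1 \rangle + \langle x_2, x_2 \rangle$, we have $\rho(x_1, x_2) = \rho(x_1)\rho(x_2)$. Combined with the fiber decomposition
\[ h^{-1}(g) = \bigsqcup_{g_1 + g_2 = g} h_1^{-1}(g_1) \times h_2^{-1}(g_2), \]
summing $\rho$ over the fiber and factoring the sum yields
\[ \rho(h^{-1}(g)) = \sum_{g_1 + g_2 = g} \rho(h_1^{-1}(g_1))\,\rho(h_2^{-1}(g_2)) = (\chi_1 * \chi_2)(g), \]
which is exactly what we want.

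I do not anticipate a serious obstacle: the proof is essentially a bookkeeping exercise combining the product structure of Gaussians under orthogonal direct sums with the product structure of fibers under a direct-sum homomorphism. The only minor subtlety is verifying that the construction really produces a lattice in the sense of the paper's definition, which is immediate, and that the double sum rearrangement is justified, which holds because all terms are nonnegative (so Tonelli applies, even if the fiber is infinite).
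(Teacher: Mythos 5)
Your proposal is correct and follows exactly the same route as the paper: form the orthogonal direct sum $L_1 \oplus L_2$ with the homomorphism $(x_1,x_2) \mapsto h_1(x_1)+h_2(x_2)$ and observe that the pushforward of $\rho$ factors into the convolution. You actually supply more detail than the paper does (multiplicativity of $\rho$ on orthogonal sums, the fiber decomposition, and the Tonelli justification), all of which is accurate.
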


\begin{proof}

Suppose that $\chi_1 \in X$ and $\chi_2 \in X$. Then we have lattices $L_1$ and $L_2$, as well as homomorphisms $h_1: L_1 \to G$ and $h_2: L_2 \to G$, such that $\chi_1(g) = \rho(h_1^{-1}(g))$ and $\chi_2(g) = \rho(h_2^{-1}(g))$ for all $g \in G$. Take $L_3$ to be the orthogonal direct sum of $L_1$ and $L_2$. We have the homomorphism $h_3: L_3 \to G$ given by $h_3((x_1, x_2)) = h_1(x_1) + h_2(x_2)$. Define $\chi_3: G \to \mathbb{R}$ as $\chi_3(g) = \rho(h_3^{-1}(g))$. Then $\chi_3$ = $\chi_1 * \chi_2$. We also clearly have that $\chi_3 \in X$. This yields the result.

\end{proof}

\begin{remark}
Curiously, we also have that $X$ is closed under multiplication, if we instead take $L_3$ to be the sublattice of $L_1 \oplus L_2$ of points $(x_1, x_2)$ with $h_1(x_1) = h_2(x_2)$, and $h_3((x_1, x_2))$ to be $h_1(x_1)$.
\end{remark}

\begin{proposition} \label{thm:chi_inequality_rollary}
If $\chi \in X$, then for all $g_1, g_2 \in G$, we have

$$\frac{\chi(g_1)\chi(g_2)}{\chi(0)} \leq \frac{\chi(g_1 + g_2) + \chi(g_1 - g_2)}{2}$$
\end{proposition}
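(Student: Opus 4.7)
The plan is to reduce the inequality to the Regev--Stephens-Davidowitz inequality on lattice cosets, which the introduction already flags as the tool imported from \cite{RSD}. Since $\chi \in X$, fix a lattice $L$ and a homomorphism $h: L \to G$ with $\chi(g) = \rho(h^{-1}(g))$. Let $K = \ker(h)$, which is itself a lattice (a discrete subgroup of the ambient inner product space of $L$). Choose lifts $x_1, x_2 \in L$ with $h(x_1) = g_1$ and $h(x_2) = g_2$; the choice does not matter since the fibers are cosets of $K$. Then by the homomorphism property,
$$h^{-1}(0) = K,\quad h^{-1}(g_i) = K + x_i,\quad h^{-1}(g_1 \pm g_2) = K + x_1 \pm x_2.$$

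Substituting these into the definition of $\chi$, the desired inequality
$$\frac{\chi(g_1)\chi(g_2)}{\chi(0)} \leq \frac{\chi(g_1+g_2)+\chi(g_1-g_2)}{2}$$
becomes exactly
$$\frac{\rho(K+x_1)\,\rho(K+x_2)}{\rho(K)} \leq \frac{\rho(K+x_1+x_2)+\rho(K+x_1-x_2)}{2},$$
which is the main theorem of \cite{RSD} applied to the lattice $K$ with shift vectors $x_1, x_2$ (and Gaussian parameter $s = 1$, matching our normalization of $\rho$). So I would just cite that inequality and be done.

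There is essentially no obstacle beyond carrying out the substitution: the cosets $h^{-1}(g)$ are exactly of the form needed by RSD, and $K$ inherits lattice structure from $L$ automatically. The only thing worth remarking on is that the proposition is not really a new statement but the translation of the RSD coset inequality into the Gaussian-pushforward language set up in the previous definitions; this translation is what makes it available downstream for proving monotonic diffusion via convolutional exponentials.
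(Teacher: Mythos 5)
Your proof is correct and is essentially identical to the paper's: the paper likewise takes the lattice to be $\ker(h)$ so that the fibers $h^{-1}(g)$ become cosets, and then cites the coset inequality from \cite{RSD}. The only quibble is a citation detail: the additive inequality you display is not literally the main theorem of \cite{RSD} (which is the product form $\rho(K+x_1)^2\rho(K+x_2)^2 \leq \rho(K+x_1+x_2)\rho(K+x_1-x_2)\rho(K)^2$) but its consequence via AM--GM, stated there as (4c) of Corollary 2.2, which is what the paper cites.
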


\begin{proof}

This follows immediately from (4c) of corollary 2.2 of \cite{RSD}, if we take the lattice $\mathcal{L}$ to be the kernel of a homomorphism from a lattice to $G$ whose corresponding Gaussian pushforward function is $\chi$.

\end{proof}

From continuity, we immediately have the following stronger statement:

\begin{corollary} \label{thm:chi_inequality}
If $\chi$ is in the topological closure $\bar{X}$ of $X$ then for all $g_1, g_2 \in G$, we have

$$\frac{\chi(g_1)\chi(g_2)}{\chi(0)} \leq \frac{\chi(g_1 + g_2) + \chi(g_1 - g_2)}{2}$$
\end{corollary}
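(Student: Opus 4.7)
The plan is to show that the inequality is a closed condition on $\mathbb{R}^G$, so it automatically extends from $X$ to its topological closure $\bar{X}$. Since $G$ is finite, $\mathbb{R}^G$ is a finite-dimensional real vector space with its standard topology, with respect to which the closure is taken.

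First I would check that $\chi(0)$ is bounded away from zero on $X$; this is needed for the left-hand side of the desired inequality to be continuous. For any $\chi \in X$ arising from a lattice $L$ and a homomorphism $h : L \to G$, the preimage $h^{-1}(0) = \ker h$ contains $0 \in L$, so
$$\chi(0) = \rho(\ker h) \geq \rho(0) = 1.$$
By continuity, this bound persists on $\bar{X}$: every $\chi \in \bar{X}$ satisfies $\chi(0) \geq 1 > 0$.

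Next I would consider the function
$$F(\chi) = \frac{\chi(g_1 + g_2) + \chi(g_1 - g_2)}{2} - \frac{\chi(g_1)\chi(g_2)}{\chi(0)},$$
which is continuous on the open set $\{\chi \in \mathbb{R}^G : \chi(0) \neq 0\}$. By Proposition \ref{thm:chi_inequality_rollary}, $F(\chi) \geq 0$ for every $\chi \in X$. Since $\bar{X}$ is contained in the closed set $\{\chi : \chi(0) \geq 1\}$, on which $F$ is continuous, the inequality $F \geq 0$ passes to every limit point of $X$, which is exactly the desired conclusion. There is no genuine obstacle here; the only subtlety worth flagging is the possibility that $\chi(0)$ vanishes somewhere on $\bar{X}$, which is precisely why the uniform lower bound $\chi(0) \geq 1$ on $X$ had to be established first.
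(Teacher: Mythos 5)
Your proof is correct and follows the same route as the paper, which simply asserts the corollary "from continuity." You usefully make explicit the one detail the paper leaves implicit, namely that $\chi(0) \geq 1$ on $X$ (since $0 \in \ker h$), so the quotient $\chi(g_1)\chi(g_2)/\chi(0)$ remains continuous on all of $\bar{X}$.
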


\begin{proposition} \label{thm:chi_convolve_even}

If $\chi$ is in $\bar{X}$, $\upsilon$ is a nonnegative, even, real-valued function on $G$, and $\omega = \chi * \upsilon$, then for all $g \in G$,

$$\frac{\chi(g)}{\chi(0)} \leq \frac{\omega(g)}{\omega(0)}$$

\end{proposition}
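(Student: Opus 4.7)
The plan is to cross-multiply the desired inequality, expand $\omega$ using the definition of convolution, then pair up the summation variable $h$ with $-h$ and reduce the problem to a pointwise application of Corollary \ref{thm:chi_inequality}.

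First I would record the key symmetry that $\chi$ is even: if $\chi \in X$ with $\chi(g) = \rho(h^{-1}(g))$, then $h^{-1}(-g) = -h^{-1}(g)$ and $\rho$ is invariant under negation, so $\chi(-g) = \chi(g)$; by continuity this extends to $\bar{X}$. Consequently the claim is equivalent to
$$\chi(g) \sum_{h \in G} \chi(h)\, \upsilon(h) \;\leq\; \chi(0) \sum_{h \in G} \chi(g-h)\, \upsilon(h),$$
where on the left I have used $\chi(-h) = \chi(h)$ to rewrite $\omega(0) = \sum_h \chi(-h)\upsilon(h)$.

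Next, I would group the summands on each side in pairs $\{h, -h\}$. Because both $\chi$ and $\upsilon$ are even, the pair on the left contributes $2\chi(g)\chi(h)\upsilon(h)$, while on the right it contributes $\chi(0)\upsilon(h)\bigl(\chi(g-h)+\chi(g+h)\bigr)$. So it suffices to establish, for every $h \in G$,
$$2\,\chi(g)\chi(h)\,\upsilon(h) \;\leq\; \chi(0)\,\upsilon(h)\,\bigl(\chi(g-h)+\chi(g+h)\bigr).$$
Since $\upsilon(h) \geq 0$, this reduces to
$$\frac{\chi(g)\chi(h)}{\chi(0)} \;\leq\; \frac{\chi(g+h)+\chi(g-h)}{2},$$
which is precisely Corollary \ref{thm:chi_inequality} applied with $g_1 = g$ and $g_2 = h$.

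I do not anticipate a real obstacle here: the entire content is the bookkeeping observation that $\upsilon$'s nonnegativity lets one apply the two-point inequality termwise, and that evenness of $\upsilon$ (and of $\chi$) is exactly what is needed to match the symmetric combination $\chi(g+h)+\chi(g-h)$ appearing in Corollary \ref{thm:chi_inequality}. The one place to be careful is the $h = -h$ case (i.e., $2h = 0$), where the pair collapses to a single term; but then both sides of the pointwise inequality should be halved accordingly and the argument still goes through without change.
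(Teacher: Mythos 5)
Your proof is correct and is essentially the paper's own argument: the paper symmetrizes the convolution sum by averaging $\omega(g)=\sum_{g'}\chi(g-g')\upsilon(g')$ with its reindexed form $\sum_{g'}\chi(g+g')\upsilon(g')$ (using evenness of $\upsilon$) and then applies Corollary \ref{thm:chi_inequality} termwise via nonnegativity of $\upsilon$, which is exactly your pairing of $h$ with $-h$ followed by the pointwise two-point inequality. The only cosmetic difference is that you cross-multiply and invoke evenness of $\chi$ to rewrite $\omega(0)$, whereas the paper gets $\omega(0)$ by a change of variable alone; both are fine.
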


\begin{proof}

For all $g \in G$, we have

$$\omega(g) = \sum_{g' \in G} \chi(g - g')\upsilon(g')$$
$$= \sum_{g' \in G} \chi(g + g')\upsilon(-g')$$
$$= \sum_{g' \in G} \chi(g + g')\upsilon(g')$$

Taking the mean of the first and third sums above, we have:

$$\omega(g) = \sum_{g' \in G} \frac{\chi(g + g') + \chi(g - g')}{2}\upsilon(g')$$

Applying \ref{thm:chi_inequality},

$$\omega(g) \geq \sum_{g' \in G} \frac{\chi(g)\chi(g')\upsilon(g')}{\chi(0)}$$
$$= \frac{\chi(g)}{\chi(0)}\sum_{g' \in G}\chi(g')\upsilon(-g')$$
$$= \frac{\chi(g)\omega(0)}{\chi(0)}$$

Dividing by $\omega(0)$ yields the result.

\end{proof}

\section{The convolutional exponential of nonnegative even functions}

\begin{definition}
	Suppose $\upsilon$ is a real-valued function on $G$. We define $\cexp(\upsilon)$, the \textbf{convolutional exponential} of $\upsilon$, by
	$$\cexp(\upsilon) = \sum\limits_{n=0}^{\infty}\frac{\upsilon^{*n}}{n!},$$
	where $\upsilon^{*n}$ is the $n$th convolutional power of $\upsilon$. From the convolution theorem, we can equivalently define $\cexp(\upsilon)$ by the equation
	$$\widehat{\cexp(\upsilon)} = \exp(\hat{\upsilon}),$$
	where $\exp$ is the pointwise exponential operator. From this perspective, it's clear that $\cexp(a + b) = \cexp(a)*\cexp(b)$.
\end{definition}

In this section, we'll show that the convolutional exponential of any nonnegative even function on $G$ is a Gaussian pushforward.

\begin{definition}
We'll use $\Phi$ to refer to the set of functions on $G$ of the form $\phi(g) = \delta(g - g_0) + \delta(g + g_0)$ for some $g_0 \in G$. Here we have $\delta(0) = 1$ and $\delta(x) = 0$ for $x \neq 0$. We clearly have that $\Phi$ is a basis for the even functions on $G$.
\end{definition}

\begin{definition}

We'll say that a sequence of real-valued functions $\upsilon_n$ on $G$ is $O(f(n))$ if, for any $g \in G$, we have that $\upsilon_n(g)$ is $O(f(n))$. Since all norms on a finite-dimensional vector space are equivalent, this is the same as saying that $\lVert \upsilon_n \rVert$ is $O(f(n))$, for any choice of norm on $\mathbb{R}^G$. We also have that $\upsilon_n$ is $O(f(n))$ iff $\hat{\upsilon}_n$ is $O(f(n))$, by the Plancherel theorem.

\end{definition}

\begin{lemma} \label{thm:reals_2}
    If $a$ and $b$ are both in $[0, C]$, then for all $n \in \mathbb{N}$, we have $|a^n - b^n| \leq nC^{n- 1}|a - b|$.
\end{lemma}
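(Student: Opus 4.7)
The plan is to reduce to the standard factorization identity
$$a^n - b^n = (a - b)\sum_{k=0}^{n-1} a^{n-1-k} b^{k},$$
which holds for all $n \geq 1$ (the $n = 0$ case is trivial since $|1 - 1| = 0 \leq 0$). This is an elementary algebraic identity that does not depend on the sign of $a$ or $b$.

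From here, the argument is almost immediate. Since $a, b \in [0, C]$, each summand $a^{n-1-k}b^k$ is a product of $n-1$ factors each lying in $[0, C]$, hence is bounded by $C^{n-1}$. There are $n$ such summands, so the sum has absolute value at most $nC^{n-1}$. Taking absolute values of the factored expression and applying this bound gives $|a^n - b^n| \leq |a - b| \cdot nC^{n-1}$, which is the desired inequality.

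I don't anticipate any real obstacle here; the main thing to be careful about is covering the $n = 0$ edge case (where the sum in the factorization is empty and the statement $0 \leq 0$ must be read directly), and making sure the nonnegativity hypothesis $a, b \geq 0$ is actually used to bound each $a^{n-1-k}b^k$ by $C^{n-1}$ without needing absolute values inside the sum. An alternative is induction on $n$ using $a^{n+1} - b^{n+1} = a(a^n - b^n) + b^n(a - b)$, but the telescoping factorization is shorter and more transparent, so I would go with that.
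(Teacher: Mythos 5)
Your proof is correct and matches the paper's argument: both use the factorization $a^n - b^n = (a-b)\sum a^i b^{n-1-i}$ and bound each of the $n$ summands by $C^{n-1}$ using nonnegativity. Your handling of the $n=0$ edge case and use of absolute values (rather than the paper's WLOG $a \geq b$) are only cosmetic differences.
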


\begin{proof}
    Without loss of generality, assume $a \geq b$. We have:
    $$a^n - b^n = (a - b)\sum_{i = 0}^{n - 1}a^ib^{n - 1 - i}$$
    Since each term $a^ib^{n - 1 - i}$ of the summation is at most $C^{n - 1}$, we have:
    $$a^n - b^n \leq nC^{n- 1}(a - b)$$
    Since both sides are nonnegative, the above inequality proves the lemma.
\end{proof}

\begin{lemma} \label{thm:reals_3}
    If $a_n$ and $b_n$ are both $1 + O(1/n)$, and $a_n - b_n$ is $O(f(n))$, then $a_n^n - b_n^n$ is $O(nf(n))$.
\end{lemma}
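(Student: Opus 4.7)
The plan is to reduce directly to the per-$n$ inequality supplied by Lemma~\ref{thm:reals_2}, by picking a common upper bound $C$ (depending on $n$) for both $a_n$ and $b_n$, and then showing that the resulting exponential factor is bounded.

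From the hypothesis $a_n, b_n = 1 + O(1/n)$, there is a constant $K > 0$ and a threshold $N$ such that $a_n, b_n \in [0,\, 1 + K/n]$ for all $n \geq N$. The asymptotic conclusion is vacuous for $n < N$, so it suffices to treat $n \geq N$. Applying Lemma~\ref{thm:reals_2} with $C = 1 + K/n$ yields
$$|a_n^n - b_n^n| \;\leq\; n\,(1 + K/n)^{n-1}\,|a_n - b_n|.$$

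The last step is to note that $(1 + K/n)^{n-1} \to e^K$, so the middle factor is $O(1)$. Combined with the hypothesis $|a_n - b_n| = O(f(n))$, this gives $|a_n^n - b_n^n| = O(nf(n))$, as desired. I do not expect a substantive obstacle: the argument is just a chaining of Lemma~\ref{thm:reals_2} with the standard bound on $(1 + K/n)^n$. The only mild subtlety is that the constant $C$ in Lemma~\ref{thm:reals_2} is allowed to vary with $n$, since that lemma is applied pointwise at each $n$.
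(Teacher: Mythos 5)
Your proof is correct and is essentially identical to the paper's: both bound $a_n, b_n$ by $1 + K/n$, apply Lemma~\ref{thm:reals_2} with $C = 1 + K/n$, and absorb the factor $(1 + K/n)^{n-1} \leq e^K$ into the implied constant. Your remark that the lower bound $a_n, b_n \geq 0$ (needed for Lemma~\ref{thm:reals_2}) holds for large $n$ is a small point the paper leaves implicit.
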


\begin{proof}
    We have, for some $C$, that $a_n \leq 1 + C/n$ and $b_n \leq 1 + C/n$. By applying \ref{thm:reals_2}, we have
    $$|a_n^n - b_n^n| \leq n(1 + C/n)^{n-1}|a_n - b_n| \leq ne^C|a_n - b_n| \in O(nf(n))$$
\end{proof}

\begin{lemma} \label{thm:gaussian_approx}
Suppose $\phi \in \Phi$ and $\alpha \in \mathbb{R}$. Then we can find a sequence of Gaussian pushforwards $(\chi_n)$ such that $\delta + \alpha\phi/n - \chi_n$ is $O(1/n^4)$.
\end{lemma}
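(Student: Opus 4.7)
The plan is to construct $\chi_n$ as a one-dimensional Gaussian pushforward. Write $\phi(g) = \delta(g - g_0) + \delta(g + g_0)$. Since every Gaussian pushforward is pointwise nonnegative while $\delta + \alpha\phi/n$ has an $\Omega(1/n)$ negative entry when $\alpha < 0$, the lemma can only hold for $\alpha \geq 0$, which is the case needed downstream; I dispatch $\alpha = 0$ by setting $\chi_n = \delta$ (the pushforward from the trivial lattice $\{0\}$), and from now on assume $\alpha > 0$. Equip $L = \mathbb{Z}$ with the inner product $\langle 1, 1 \rangle = s_n := \log(n/\alpha)/\pi$, and let $h_n \colon L \to G$ be the homomorphism $h_n(k) = k g_0$. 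Writing $q_n := e^{-\pi s_n} = \alpha/n$, this yields the Gaussian pushforward
$$\chi_n(g) \;=\; \sum_{k \in \mathbb{Z},\ k g_0 = g} q_n^{\,k^2}.$$

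Next I would split this sum into the terms with $|k| \leq 1$ and the tail $|k| \geq 2$, and verify via a short case analysis on the order of $g_0$ that the three low-order terms sum pointwise to $\delta(g) + (\alpha/n)\phi(g)$. When $g_0$ has order at least $3$, the indices $k = -1, 0, 1$ are mapped to the distinct elements $-g_0, 0, g_0$ and give the three nonzero values of $\delta + (\alpha/n)\phi$ directly. When $g_0$ has order $2$, the two outer indices both contribute to $\chi_n(g_0)$ to give $2\alpha/n$, which matches the target because $\phi(g_0) = \delta(0) + \delta(2g_0) = 2$ in this case. When $g_0 = 0$, all three indices contribute to $\chi_n(0)$ to give $1 + 2\alpha/n$, which matches because $\phi(0) = 2$ here as well.

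The remaining error is the tail,
$$\chi_n(g) - \delta(g) - (\alpha/n)\phi(g) \;=\; \sum_{k\colon k g_0 = g,\ |k|\geq 2} q_n^{\,k^2},$$
which I can bound uniformly in $g$ by $\sum_{|k|\geq 2} q_n^{\,k^2} \leq 2 q_n^4 /(1 - q_n^2) = O(q_n^4) = O(1/n^4)$, using $q_n^{k^2} \leq q_n^{2k}$ for $k \geq 2$ and $q_n \in (0, 1)$. By the norm-equivalence remark just before the lemma, this pointwise rate is equivalent to the stated $O(1/n^4)$ bound on the sequence $\delta + \alpha\phi/n - \chi_n$.

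The only real obstacle is the case analysis in the middle paragraph: because the summands of $\phi$ can coincide ($\delta_{-g_0}$ equals $\delta_{g_0}$ when $2g_0 = 0$, and both equal $\delta_0$ when $g_0 = 0$), the target has genuinely different shapes in the three cases, and one has to check that the corresponding collisions among the three low-order terms of $\chi_n$ produce matching coefficients. Once that combinatorial match is in place, the tail estimate is a one-line geometric-series bound.
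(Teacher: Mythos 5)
Your proof is correct and essentially identical to the paper's: the same rank-one lattice scaled so that $\rho(r_n) = \alpha/n$, the same homomorphism sending the generator to $g_0$, and the same split into the three central lattice points versus a geometric tail bounded by $O(1/n^4)$ (your case analysis on the order of $g_0$ just makes explicit a step the paper asserts without comment). Your observation that the statement implicitly requires $\alpha \geq 0$ is also correct --- the paper's construction only makes sense for $\alpha > 0$ and $n > \alpha$ (which is why it chooses $\chi_n$ arbitrarily for $n \leq \alpha$), and the lemma is only ever invoked with $\alpha \in \mathbb{R}_{>0}$.
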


\begin{proof}
We'll choose $\chi_n$ arbitrarily when $n \leq \alpha$, and the rest of this proof will be concerned with the tail $(\chi_n)_{n > \alpha}$. Choose a $g_0$ such that $\phi(g) = \delta(g + g_0) + \delta(g - g_0)$ for all $g \in G$. Let $r_n = \sqrt{\ln(n/\alpha)/\pi}$, so that $\alpha/n = \rho(r_n)$. Let $L_n$ be the lattice in $\mathbb{R}$ of reals of the form $kr_n$, with $k \in \mathbb{Z}$. We then have a unique group homomorphism $h_n$ from $L_n$ to $G$ that sends $r_n$ to $g_0$. We define $\chi_n$ by $\chi_n(g) = \rho(h_n^{-1}(g))$. Let $A_n = \{ r_n, 0, -r_n \}$ and $B_n = L_n \setminus A_n$. We can break $\chi_n$ into two smaller sums:

$$\chi_n(g) = \rho(h_n^{-1}(g) \cap A_n) + \rho(h_n^{-1}(g) \cap B_n)$$

The left term is equal to $\delta + \alpha\phi/n$. The right term is $O(1/n^4)$. This yields the result.

\end{proof}

\begin{lemma} \label{thm:cexp_gaussian_1}
If $\phi \in \Phi$ and $\alpha \in \mathbb{R}_{>0}$, then $\cexp(\alpha \phi)$ is in the topological closure $\bar{X}$ of $X$.
\end{lemma}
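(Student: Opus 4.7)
The plan is to exhibit $\cexp(\alpha\phi)$ as a limit of elements of $X$ by combining Lemma \ref{thm:gaussian_approx} with the convolutional analog of the identity $e^x = \lim_n (1 + x/n)^n$.

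First, I would fix a sequence of Gaussian pushforwards $(\chi_n)$ supplied by Lemma \ref{thm:gaussian_approx}, so that $\chi_n = \delta + \alpha\phi/n + \epsilon_n$ with $\epsilon_n$ being $O(1/n^4)$. By Proposition \ref{thm:X_closed}, every $n$-fold convolutional power $\chi_n^{*n}$ lies in $X$. The goal is then to show that $\chi_n^{*n} \to \cexp(\alpha\phi)$ pointwise as $n \to \infty$, which by definition of $\bar{X}$ finishes the proof.

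I would pass to the Fourier side, where convolution becomes pointwise multiplication. Fix a character $\hat{g}$ of $G$. Since $\phi = \delta_{g_0} + \delta_{-g_0}$, its transform $\hat{\phi}(\hat{g}) = 2\operatorname{Re}\hat{g}(g_0)$ is real and bounded in absolute value by $2$, so $\widehat{\delta + \alpha\phi/n}(\hat{g}) = 1 + \alpha\hat{\phi}(\hat{g})/n$ is real and of the form $1 + O(1/n)$. By Plancherel and the remark preceding Lemma \ref{thm:reals_2}, the big-$O$ bound on $\epsilon_n$ transfers to $\hat{\epsilon}_n$, so $\hat{\chi}_n(\hat{g})$ is also real and $1 + O(1/n)$, with
$$\hat{\chi}_n(\hat{g}) - \widehat{\delta + \alpha\phi/n}(\hat{g}) = \hat{\epsilon}_n(\hat{g}) \in O(1/n^4).$$
Lemma \ref{thm:reals_3}, applied with $a_n = \hat{\chi}_n(\hat{g})$ and $b_n = \widehat{\delta + \alpha\phi/n}(\hat{g})$, then gives
$$\hat{\chi}_n(\hat{g})^n - \bigl(1 + \alpha\hat{\phi}(\hat{g})/n\bigr)^n \in O(1/n^3),$$
and the latter tends to $0$. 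On the other hand, $(1 + \alpha\hat{\phi}(\hat{g})/n)^n \to e^{\alpha\hat{\phi}(\hat{g})} = \widehat{\cexp(\alpha\phi)}(\hat{g})$ by the classical exponential limit. Combining these,
$$\widehat{\chi_n^{*n}}(\hat{g}) = \hat{\chi}_n(\hat{g})^n \longrightarrow \widehat{\cexp(\alpha\phi)}(\hat{g}).$$
Since the dual group is finite, pointwise convergence of Fourier transforms implies pointwise convergence of the functions themselves, so $\chi_n^{*n} \to \cexp(\alpha\phi)$, completing the proof.

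The only step that requires any care is the Fourier comparison: one must verify that both $\hat{\chi}_n(\hat{g})$ and $\widehat{\delta + \alpha\phi/n}(\hat{g})$ are real, eventually nonnegative, and differ by $O(1/n^4)$, so that Lemma \ref{thm:reals_3} applies cleanly. Everything else is a routine packaging of the limit definition of the exponential together with Proposition \ref{thm:X_closed}.
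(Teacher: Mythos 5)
Your proposal is correct and follows essentially the same route as the paper: approximate $\delta + \alpha\phi/n$ by Gaussian pushforwards $\chi_n$ via Lemma \ref{thm:gaussian_approx}, pass to the Fourier side, compare $\hat{\chi}_n^n$ with $(1+\alpha\hat{\phi}/n)^n$ using Lemma \ref{thm:reals_3}, and invoke Proposition \ref{thm:X_closed} to conclude that $\chi_n^{*n} \in X$ converges to $\cexp(\alpha\phi)$. Your added remark that the Fourier values are real and eventually nonnegative (so that Lemmas \ref{thm:reals_2} and \ref{thm:reals_3} genuinely apply) is a detail the paper leaves implicit, and is a welcome clarification rather than a departure.
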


\begin{proof}

Let $\psi = \cexp(\alpha \phi)$. From the convolution theorem, we have that $\hat{\psi} = \exp(\alpha\hat{\phi})$. Therefore, we have
$$\hat{\psi} = \lim_{n \to \infty} (1 + \frac{\alpha\hat{\phi}}{n})^n$$.

From \ref{thm:gaussian_approx}, we have a sequence of Gaussian pushforwards $(\chi_n)$ such that ${\delta + \alpha \phi/n - \chi_n}$ is $O(1/n^4)$. Taking the Fourier transform, we have that ${1 + \alpha\hat{\phi}/n - \hat{\chi}}$ is $O(1/n^4)$. We then have from \ref{thm:reals_3} that $(1 + \alpha\hat{\phi}/n)^n - \hat{\chi_n}^n$ is $O(1/n^3)$. Combining this with the formula for $\hat{\psi}$ above, we have:
$$\lim_{n \to \infty} \hat{\chi_n}^n = \hat{\psi}$$
Applying the convolution theorem then yields:
$$\lim_{n \to \infty} \chi_n^{*n} = \psi$$.

From \ref{thm:X_closed}, we know that $\chi_n^{*n}$ is a Gaussian pushforward, so this last limit proves the result.

\end{proof}

\begin{theorem} \label{thm:cexp_gaussian_2}
Suppose $\upsilon$ is a nonnegative even function on $G$. Then $\cexp{\upsilon}$ is in $\bar{X}$.
\end{theorem}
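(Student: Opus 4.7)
The plan is to reduce the general case to Lemma~\ref{thm:cexp_gaussian_1} by exploiting that $\cexp$ converts addition into convolution. Since $\upsilon$ is even, I can expand it in the basis $\Phi$, and the key observation is that this can be done with nonnegative coefficients when $\upsilon$ itself is nonnegative. Specifically, using $\upsilon(g) = \tfrac{1}{2}(\upsilon(g) + \upsilon(-g))$, I get the decomposition
$$\upsilon = \sum_{g_0 \in G} \frac{\upsilon(g_0)}{2}\, \phi_{g_0},$$
where $\phi_{g_0}(g) = \delta(g - g_0) + \delta(g + g_0)$. Because $\upsilon \geq 0$, every coefficient $\upsilon(g_0)/2$ is nonnegative.

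Next, applying $\cexp(a + b) = \cexp(a) * \cexp(b)$ iteratively across this finite sum expresses $\cexp(\upsilon)$ as the convolution, over $g_0 \in G$, of the finitely many functions $\cexp\!\left(\tfrac{\upsilon(g_0)}{2}\phi_{g_0}\right)$. Each factor lies in $\bar{X}$: when $\upsilon(g_0) > 0$ this is exactly Lemma~\ref{thm:cexp_gaussian_1}, and when $\upsilon(g_0) = 0$ the factor equals $\delta$, which lies in $X$ via the trivial lattice $L = \{0\}$ with trivial homomorphism to $G$.

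To conclude I need $\bar{X}$ to be closed under convolution. This follows from Proposition~\ref{thm:X_closed} combined with continuity: convolution on the finite-dimensional vector space $\mathbb{R}^G$ is a continuous bilinear map, so if $\chi_n \to \chi$ and $\chi'_n \to \chi'$ with $\chi_n, \chi'_n \in X$, then $\chi_n * \chi'_n \in X$ tends to $\chi * \chi'$, which therefore lies in $\bar{X}$. By induction this extends to finite convolutions, giving $\cexp(\upsilon) \in \bar{X}$.

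The real work has already been done in Lemma~\ref{thm:cexp_gaussian_1}; the only additional ingredients are the nonnegative $\Phi$-decomposition of an even nonnegative function (immediate from evenness and nonnegativity) and closure of $\bar{X}$ under convolution (immediate from continuity). Neither is a genuine obstacle. If anything, the one spot where care is warranted is in recognizing that the decomposition must use nonnegative coefficients on the symmetric basis elements $\phi_{g_0}$ rather than, say, on $\delta_{g_0}$ alone — this is precisely what allows Lemma~\ref{thm:cexp_gaussian_1}, which has the hypothesis $\alpha \in \mathbb{R}_{>0}$, to be applied termwise.
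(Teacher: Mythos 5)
Your proof is correct and follows essentially the same route as the paper's: decompose $\upsilon$ into nonnegative multiples of elements of $\Phi$, convert the sum into a convolution via $\cexp$, apply Lemma \ref{thm:cexp_gaussian_1} to each factor, and conclude using Proposition \ref{thm:X_closed} together with the continuity of convolution. You are merely more explicit than the paper about the exact decomposition and about discarding (or trivially handling) the zero-coefficient terms.
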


\begin{proof}
We can clearly represent $\upsilon$ as a sum $\alpha_1 \phi_1 + \cdots + \alpha_n \phi_n$, with $\alpha_i$ being positive scalars and $\phi_i \in \Phi$. Then, applying the convolution theorem, we can represent $\cexp(\upsilon)$ as the convolution $\cexp(\alpha_1 \phi_1) * \cdots *\cexp(\alpha_n \phi_n)$. From \ref{thm:cexp_gaussian_1}, we know that each individual term in this convolution is in the closure of $X$, so the result follows from \ref{thm:X_closed} and the continuity of the convolution operation.
\end{proof}

\section{The heat kernel on weighted Abelian Cayley graphs}

In this section, we prove \ref{thm:main_ineq}. The hard work has already been done in \cite{RSD} and in the previous section, now we just need to translate these results into the language of graph theory.

\begin{proof} [Proof of Theorem \ref{thm:main_ineq}]

We define $\tau$ to be the element of $\mathbb{R}^V$ obtained by applying the graph laplacian to $-\delta$. In other words, we have $\tau: V \to \mathbb{R}$ and $\tau(v) = -L(0, v)$, with $L$ the graph laplacian. Then $\tau$ is an even function and is nonnegative everywhere except at $0$. We also have that the linear operator determined by the graph laplacian is just convolution with $-\tau$; it follows that the heat kernel matrix $H_t$ corresponds to convolution with $\cexp(t \tau)$, and therefore $H_t(u, v) = \cexp(t \tau)(u - v)$. So, to prove \ref{thm:main_ineq}, it suffices to show that, for $t \leq t'$, and for any $v \in V$,

$$\frac{\cexp(t\tau)(v)}{\cexp(t\tau)(0)} \leq \frac{\cexp(t'\tau)(v)}{\cexp(t'\tau)(0)}$$

Let $\tau': V \to \mathbb{R}$ be given by $\tau'(0) = 0$ and $\tau'(v) = \tau(v)$ for $v \neq 0$. Then $\tau'$ is a nonnegative even function. We also have that $\cexp(t\tau') = e^{-\tau(0)}\cexp(t\tau)$. It therefore suffices to show that, for $t \leq t'$, and for any $v \in V$,

$$\frac{\cexp(t\tau')(v)}{\cexp(t\tau')(0)} \leq \frac{\cexp(t'\tau')(v)}{\cexp(t'\tau')(0)}$$

Since we have $\cexp(t'\tau') = \cexp(t\tau') * \cexp((t' - t)\tau')$, the result follows immediately from \ref{thm:cexp_gaussian_2} and \ref{thm:chi_convolve_even}.

\end{proof}

\section{Directions for further work}

In order to prove \ref{thm:main_ineq}, we needed the inequality \ref{thm:chi_inequality} for $\chi$ a Gaussian pushforward:

$$\frac{\chi(g_1)\chi(g_2)}{\chi(0)} \leq \frac{\chi(g_1 + g_2) + \chi(g_1 - g_2)}{2}$$

which itself follows from the stronger inequality, proven in \cite{RSD}:

\begin{equation} \label{eqn:RSD_main}
\chi(g_1)^2\chi(g_2)^2 \leq \chi(g_1+g_2)\chi(g_1 - g_2)\chi(0)^2
\end{equation}

The proof of monotonic diffusion on flat tori also uses \eqref{eqn:RSD_main}, except $\chi(g)$ is replaced by $H_t(0, g)$, with $H$ the heat kernel on the torus in question.

So, if we want to see where else we have monotonic diffusion, it is natural to ask: where else does \eqref{eqn:RSD_main} hold? On the surface, it looks we need group structure to even state \eqref{eqn:RSD_main}. However, \eqref{eqn:RSD_main} can be reformulated in the language of Riemannian symmetric spaces. Indeed, suppose $M$ is a Riemannian symmetric space, that is, $M$ is a connected Riemannian manifold equipped with an isometric automorphism $s_p$ for each point $p$, such that $s_p(p) = p$ and the derivative of $s_p$ at $p$ is the negation map on $T_p$. For flat tori then, we have a symmetric space structure with $s_x(y) = 2x - y$, and so the following is equivalent to \eqref{eqn:RSD_main}:

\begin{equation} \label{eqn:symmetric_space_inequality}
H_t(a, b)^2H_t(b, c)^2 \leq H_t(a, c)H_t(a, s_b(c))H_t(a, a)^2
\end{equation}

To see this, substitute $0$ for $a$, $x$ for $b$, and $x + y$ for $c$. Then $s_b(c)$ is $x - y$, and we recover the original inequality.

It is shown Appendix \ref{app:symmetric_space_diffusion} that any symmetric space which satisfies \eqref{eqn:symmetric_space_inequality} has monotonic diffusion. This inequality doesn't hold for arbitrary symmetric spaces; it is shown in Appendix \ref{app:H3} that it fails on $\mathbb{H}^3$. However, we already know from \cite{RSD} that \eqref{eqn:symmetric_space_inequality} holds for flat tori, and the author has numerically tested \eqref{eqn:symmetric_space_inequality} for the 2-sphere and real projective 2-space, using the spherical harmonic functions of SciPy to approximate the heat kernel. The inequality appears to hold for these spaces. It seems natural then to ask: for which Riemannian symmetric spaces does \eqref{eqn:symmetric_space_inequality} hold? Perhaps it holds on all Riemannian symmetric spaces of compact type.

It can also be readily seen from equation (1.6) of \cite{GN} that the heat kernel has monotonic diffusion on $\mathbb{H}^3$, despite the fact that \eqref{eqn:symmetric_space_inequality} fails on this space. Perhaps monotonic diffusion holds on other hyperbolic spaces as well.

\section{Ackowledgements}

The train of thought that lead to the main idea of this paper began with a discussion with Alexander Borisov. He had noticed that strongly positive-definite functions obey the main inequality of \cite{RSD}. It was through trying to relate this to the fact that Gaussian pushforwards obey the same inequality that I came across the construction used in section 3. So I thank him for the stimulating discussion, as well as for helpful comments on a draft. I thank Oded Regev as well for helpful comments and discussion.

This paper was written while I was visiting the University of M\"unster. I thank Christopher Deninger for making this possible.

\appendix
\section{Appendix: Lack of monotonic diffusion on arbitrary manifolds} \label{app:monotonicity_counterexample}

We present here an argument of Jeff Cheeger that not all Riemannian manifolds have monotonic diffusion.

It is shown in \cite{CV} that, on a compact manifold $M$ of dimension at least $3$, any finite sequence $\lambda_1 \leq \lambda_2 \leq \cdots \leq \lambda_n$ of positive real numbers appears as the first nonzero eigenvalues, counted with multiplicity, of the Laplacian on $M$ for some choice of metric. In particular, there exist compact Riemannian manifolds on which the first nonzero eigenvalue of the Laplacian has multiplicity $1$. Let $M$ be such a manifold, with $\lambda_1$ the first eigenvalue and $\lambda_2$ the second. Let $f: M \to \mathbb{R}$ be an eigenfunction of the Laplacian corresponding to the eigenvalue $\lambda_1$. Choose points $x$ and $y$ in $M$ so that $f(y) > f(x) > 0$. We have the following, for some positive constant $C$:
$$H_t(x, x) = 1 + Ce^{-\lambda_1t}f(x) + O(e^{-\lambda_2t})$$
$$H_t(x, y) = 1 + Ce^{-\lambda_1t}f(y) + O(e^{-\lambda_2t})$$
Since $\lambda_2 > \lambda_1$, we have that $H_t(x, y) > H_t(x, x)$ for sufficiently large $t$. Since the quotient $H_t(x, y) / H_t(x, x)$ is eventually greater than $1$, but it converges to $1$ as $t \rightarrow \infty$, it can't be monotonically increasing. Therefore, we don't have monotonic diffusion on $M$.

\section{Appendix: Monotonic diffusion on Riemannian symmetric spaces} \label{app:symmetric_space_diffusion}

Here we show that, if the heat kernel of a Riemannian symmetric space $M$ satisfies the inequality
\begin{equation} \tag{2}
H_t(a, b)^2H_t(b, c)^2 \leq H_t(a, c)H_t(a, s_b(c))H_t(a, a)^2
\end{equation}
then it has monotonic diffusion.
First of all, note that \eqref{eqn:symmetric_space_inequality} implies the following analogue of  \ref{thm:chi_inequality_rollary}:

\begin{equation} \label{eqn:heat_kernel_lemma}
\frac{H_t(a, b)H_t(b, c)}{H_t(a, a)} \leq \frac{H_t(a, c) + H_t(a, s_b(c))}{2}
\end{equation}

This follows from dividing both sides of \eqref{eqn:symmetric_space_inequality} by $H_t(a, a)^2$, and then applying the AM-GM inequality. This is essentially the same as the proof of \ref{thm:chi_inequality_rollary}.

From this, we can derive the monotonic diffusion inequality in a way that parallels the proof of \ref{thm:chi_convolve_even}. We have, for $t' \geq t$:

$$H_{t'}(x, y) = \int_M H_{t' - t}(x, z)H_t(z, y)dz$$
$$ = \int_M H_{t' - t}(x, s_x(z))H_t(s_x(z), y)dz$$
$$ = \int_M H_{t' - t}(x, z)H_t(s_x(z), y)dz$$

Averaging the first and third lines above, we have:

$$H_{t'}(x, y) = \int_M H_{t' - t}(x, z)\frac{H_t(z, y) + H_t(s_x(z), y)}{2}dz$$
$$ = \int_M H_{t' - t}(x, z)\frac{H_t(y, z) + H_t(y, s_x(z))}{2}dz$$

Applying \eqref{eqn:heat_kernel_lemma} then gives

$$H_{t'}(x, y) \geq \int_M H_{t' - t}(x, z)\frac{H_t(y, x)H_t(x, z)}{H_t(y, y)}dz$$
$$= \int_M H_{t' - t}(x, z)H_t(z, x)dz\frac{H_t(x, y)}{H_t(y, y)}$$
$$= \frac{H_{t'}(x, x)H_t(x, y)}{H_t(y, y)}$$

Dividing by $H_{t'}(x, x)$ gives the result.

\section{Appendix: Failure of \eqref{eqn:symmetric_space_inequality} on $\mathbb{H}^3$} \label{app:H3}

Here we present an argument, due to Oded Regev, that the inequality \eqref{eqn:symmetric_space_inequality} fails on hyperbolic 3-space. In this section we realize $\mathbb{H}^3$ through the hyperboloid model, as described in \cite{CFKP}. In other words, we see $\mathbb{H}^3$ as the set of points $(x_0, x_1, x_2, x_3)$ in $\mathbb{R}^4$ with $x_0^2 - x_1^2 - x_2^2 - x_3^2 = 1$. The geodesic distance between two points $x = (x_0, x_1, x_2, x_3)$ and $y = (y_0, y_1, y_2, y_3)$ in $\mathbb{H}^3$ is then given by

$$d(x, y) = \arccosh(x_0y_0 - x_1y_1 - x_2y_2 - x_3y_3)$$

Suppose $d_1 \in \mathbb{R}_{>0}$. We define the following points in $\mathbb{H}^3$:

$$a = (\cosh(d_1), \sinh(d_1), 0, 0)$$
$$b = (1, 0, 0, 0)$$
$$c = (\cosh(d_1), 0, \sinh(d_1), 0)$$

Then we have $d_1 = d(a,b) = d(b,c)$, and we define $d_2$ as $d(a, c) = \arccosh(\cosh(d_1)^2)$. We also fix some arbitrary $t \in \mathbb{R}_{>0}$. We have a formula for the heat kernel on $\mathbb{H}^3$, given in \cite{GN}. When the geodesic distance between $x$ and $y$ is given by $d$, we have:

$$H_t(x, y) = \frac{1}{(4\pi t)^{3/2}}\frac{d}{\sinh(d)}\exp(-t-\frac{d^2}{4t})$$

Using this formula, the inequality \eqref{eqn:symmetric_space_inequality} reduces to the following, for our choices of $a$, $b$, $c$, and $t$:

$$\frac{d_1^2}{\sinh(d_1)^2}\exp(-\frac{d_1^2}{2t}) \leq \frac{d_2}{\sinh(d_2)}\exp(-\frac{d_2^2}{4t})$$

Let $LS$ and $RS$ be the left and right sides of the above inequality. We have, as $d_1$ and $d_2$ grow:

$$\ln(LS) = \frac{-d_1^2}{2t} + O(d_1)$$
$$\ln(RS) = \frac{-d_2^2}{4t} + O(d_2)$$

However, it can be readily seen that $d_2 = 2d_1 + O(1)$. We therefore have
$$\ln(RS) = \frac{-d_1^2}{t} + O(d_1),$$
and therefore $RS < LS$ for sufficiently large $d_1$. This violates \eqref{eqn:symmetric_space_inequality}.

\end{document}